\newtheorem{theorem}{Theorem}[section]
\newtheorem{proposition}[theorem]{Proposition}
\newtheorem{conjecture}[theorem]{Conjecture}
\theoremstyle{remark}
\def\QSet{\mbox{\rm\kern.24em
\vrule width.03em height1.48ex depth-.051ex \kern-.26em Q}}
\def\T{{\mathbb T}}
\def\R{{\mathbb R}}
\def\N{{\mathbb N}}
\def\C{{\mathbb C}}
\def\Z{{\mathbb Z}}
\def\F{{\mathcal F}}
\def\k{{\bf k}}
\def\x{{\bf x}}\def\\xi{{\bf \xi}}
\def\be#1{\begin{equation}\label{#1}}
\def\bas{\begin{align*}}
\def\eas{\end{align*}}
\def\bi{\begin{itemize}}
\def\ei{\end{itemize}}
\newenvironment{proof}{\noindent {\bf Proof} }{\endprf\par}
\def \endprf{\hfill  {\vrule height6pt width6pt depth0pt}\medskip}
\def\emph#1{{\it #1}}
\begin{document}

\title[Improved estimates for the  discrete Fourier restriction]{Improved estimates for the  discrete Fourier restriction to the higher dimensional sphere}
\author{Jean Bourgain}
\address{School of Mathematics, Institute for Advanced Study, Princeton, NJ 08540}
\email{bourgain@@math.ias.edu}
\author{Ciprian Demeter}
\address{Department of Mathematics, Indiana University, 831 East 3rd St., Bloomington IN 47405}
\email{demeterc@@indiana.edu}

\keywords{}
\thanks{The first  author is supported by a Sloan Research Fellowship and by NSF Grant DMS-1161752}
\thanks{ AMS subject classification: Primary 11L07; Secondary 11L05, 42A16}
\begin{abstract}

We improve the exponent in  \cite{Bo1} for the discrete restriction to the $n$ dimensional sphere, from $p=\frac{2(n+1)}{n-3}$ to $p=\frac{2n}{n-3}$, when $n\ge 4$.
\end{abstract}
\maketitle

\section{Introduction}

Let $n\ge 2$ and $\lambda\ge 1$ be two integers. Define $N=[\lambda^{1/2}]+1$ and
$$\F_{n,\lambda}=\{\xi=(\xi_1,\ldots,\xi_n)\in\Z^n:|\xi_1|^2+\ldots|\xi_n|^2=\lambda\}.$$
When $n=2,3,4$ it is known that for each $\epsilon$ we have $|\F_{n,\lambda}|\lesssim N^{n-2+\epsilon}$, but the upper bound is only sharp for certain values of $\lambda$.  For example  $\F_{3,\lambda}=\emptyset$ when $\lambda=4^a(8m+7)$ for $a,m\in\N$.
On the other hand, if $n\ge 5$ we have a sharp estimate $|\F_{n,\lambda}|\sim N^{n-2}$, see \cite{Gr}. Throughout the paper, the implicit bounds hidden in the symbol $\lesssim$ will depend on $\epsilon$, $p$, $q$ and $n$, but never on $N$.

The discrete restriction (sometimes called extension) problem relative to the sphere is concerned with determining the order of growth in $N$ of the numbers
$$M_{p,q,n}(\lambda)=\sup_{a_\\xi\in \C}\frac{\|\sum_{\\xi\in \F_{n,\lambda}}a_\\xie(\\xi\cdot\x)\|_{L^p(\T^n)}}{\|a_\xi\|_{l^q}}$$
for $1\le p,q\le \infty$. We use the notation $e(z)=e^{2\pi i z}$. It is conjectured in \cite{Bo1}, \cite{Bo0} that
\begin{conjecture}
\label{conj1}
For each $n\ge 3$ and $\epsilon>0$ we have
$$M_{p,2,n}(\lambda)\lesssim N^{\frac{n-2}{2}-\frac{n}{p}+\epsilon}$$
for each $p\ge \frac{2n}{n-2}$
\end{conjecture}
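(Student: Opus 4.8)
The plan is to reduce Conjecture~\ref{conj1} to a single endpoint inequality and then attack that inequality by a multilinear, Bourgain--Guth-type analysis fed by arithmetic information on lattice points in spherical caps. First I would dispose of the range $p>\frac{2n}{n-2}$ by interpolation: Cauchy--Schwarz and the bound $|\F_{n,\lambda}|\lesssim N^{n-2+\epsilon}$ recalled above give the trivial estimate $M_{\infty,2,n}(\lambda)\lesssim N^{\frac{n-2}{2}+\epsilon}$, and interpolating it with the endpoint estimate $M_{p_0,2,n}(\lambda)\lesssim N^{\epsilon}$ — where $p_0:=\frac{2n}{n-2}$ is the exponent at which the conjectured exponent $\frac{n-2}{2}-\frac np$ vanishes — gives $M_{p,2,n}(\lambda)\lesssim N^{\frac{n-2}{2}-\frac np+\epsilon}$ for all $p\ge p_0$. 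So everything comes down to proving
$$\Bigl\|\sum_{\xi\in\F_{n,\lambda}}a_\xi e(\xi\cdot\x)\Bigr\|_{L^{p_0}(\T^n)}\lesssim_\epsilon N^{\epsilon}\,\Bigl(\sum_\xi|a_\xi|^2\Bigr)^{1/2},$$
and after a dyadic pigeonholing in $|a_\xi|$ (costing only a $\log N$, which $N^\epsilon$ absorbs) one may take $a_\xi$ to be the indicator of a subset $\A\subseteq\F_{n,\lambda}$, so the target becomes $\|\sum_{\xi\in\A}e(\xi\cdot\x)\|_{L^{p_0}(\T^n)}\lesssim N^\epsilon|\A|^{1/2}$. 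Two structural facts drive everything: (a) the lattice on the sphere is far sparser ($|\F_{n,\lambda}|\sim N^{n-2}$) than a generic $N^{-1/2}$-separated subset of the sphere ($\sim N^{n-1}$ points), which is precisely why the lossless exponent $p_0=\frac{2n}{n-2}$ can exceed the Stein--Tomas/$\ell^2$-decoupling exponent $\frac{2(n+1)}{n-1}$ available without arithmetic; and (b) $p_0=\frac{2n}{n-2}$ is exactly the $\ell^2$-decoupling (Stein--Tomas) endpoint for the \emph{next} sphere down, the $(n-1)$-dimensional one.

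For the endpoint I would rescale so that $\A$ sits on the sphere of radius $\sqrt\lambda\sim N$ and run a broad--narrow decomposition at an intermediate cap-scale $\delta$ (ultimately iterated through a range of scales). Partition the sphere into caps $\tau$ of angular width $\delta$, write $f=\sum_\tau f_\tau$, and at each point $\x$ separate the two possibilities: (narrow) the caps $\tau$ that matter all lie in one cap $\alpha$ of size $K\delta$; or (broad) there are $\gtrsim n$ mutually transversal caps $\tau_1,\dots,\tau_n$ with $|f(\x)|\lesssim_K\prod_{j=1}^n|f_{\tau_j}(\x)|^{1/n}$. In the broad regime I would feed $f_{\tau_1},\dots,f_{\tau_n}$ into the Bennett--Carbery--Tao multilinear restriction (equivalently multilinear Kakeya) inequality at scale $N^2$: since the $f_{\tau_j}$ are Fourier-supported on transversal pieces of an $(n-1)$-dimensional hypersurface, it controls $\bigl\|\prod_j|f_{\tau_j}|^{1/n}\bigr\|_{L^{2n/(n-1)}}$ up to $N^\epsilon$, and since $\frac{2n}{n-1}<p_0$ there is slack, which standard interpolation uses to reach $L^{p_0}$ at the price of feeding back into the narrow part of the recursion. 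In the narrow regime I would slice the cap $\alpha$ by the value $m$ of a polar coordinate: the points of $\A$ in $\alpha$ split as a disjoint union, over the $O(\max(1,N\delta^2))$ relevant integer heights $m$, of copies of $\F_{n-1,\lambda-m^2}$, each on an $(n-1)$-dimensional sphere of radius $\lesssim N\delta$. Applying the known $\ell^2$-decoupling for the $(n-1)$-sphere at scale $N\delta$ — whose critical index is exactly $p_0$, so this is lossless — to each slice, reassembling by Minkowski in $m$ and Cauchy--Schwarz, and inserting the cardinality bound $|\F_{n-1,\lambda-m^2}|\lesssim (N\delta)^{n-3+\epsilon}$, one finds the narrow contribution is controlled once $\delta$ is not too large; iterating over $\delta$ and summing the geometric series should close the estimate.

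The hard part is the narrow case, and it is exactly the reason the endpoint $p_0=\frac{2n}{n-2}$ demanded by the Conjecture lies deeper than the $\frac{2n}{n-3}$ actually established in this paper. Three difficulties compound. (i)~A spherical cap is only \emph{approximately} a lower-dimensional sphere, and the coordinate slicing above is genuinely good only for caps whose pole points nearly along a coordinate axis; since the caps $\alpha$ occur about \emph{every} direction $\xi/|\xi|$, $\xi\in\F_{n,\lambda}$ — including irrational ones — and one is not free to rotate (a rotation destroys $\Z^n$), the reduction to $\F_{n-1,\cdot}$ must instead be carried out by an honest arithmetic argument (intersecting with rational hyperplanes, tracking denominators, or exploiting equidistribution of the lattice on the sphere for generic $\lambda$). (ii)~Reassembling $f_\alpha=\sum_m e(mx_n)\,g_m(\x')$ from its slices costs a power of the number of heights $m$ — there is no $L^{p_0}$-orthogonality in $x_n$ when $p_0>2$ — a loss of size roughly $N\delta^2$ that must cancel \emph{exactly} against the arithmetic gain $(N\delta)^{n-3}$ versus $N^{n-2}$; these balance with essentially no slack, which is what ties down the admissible range of $\delta$ and why one does not get to push all the way to $\frac{2n}{n-2}$. (iii)~Multilinear restriction is only \emph{nearly} lossless ($N^{2\epsilon}$ at scale $N^2$), so the recursion over scales must be organised — e.g.\ by proving a slightly stronger statement with explicit $\epsilon$-dependent constants and bootstrapping — so that these losses do not accumulate into a genuine power of $N$. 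Granting the full $\ell^2$-decoupling inequality for the sphere would simplify the broad/narrow bookkeeping, but the arithmetic input of steps (i)--(ii) — the mechanism by which the discrete restriction exponent overtakes the continuous one — remains the crux, and it is where the full strength of the Conjecture is not yet attained.
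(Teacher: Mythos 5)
The statement you are asked to prove is labelled a \emph{Conjecture} in the paper, and the paper does not prove it: the authors prove only Theorem~\ref{thm1} (the weaker range $p\ge\frac{2n}{n-3}$, $n\ge4$), and they observe, in the course of proving Conjecture~\ref{conj12}, that the single endpoint estimate \eqref{e6}, $M_{p_c,2,n}(\lambda)\lesssim N^\epsilon$ with $p_c=\frac{2n}{n-2}$, would imply Conjecture~\ref{conj1} by interpolation with the trivial bound $M_{\infty,2,n}\lesssim N^{(n-2)/2+\epsilon}$. Your first paragraph reproduces exactly this reduction, correctly; so far you match the paper. But \eqref{e6} is itself open, and the bulk of your proposal is an attempt at it that, by your own admission, does not close. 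That is the honest state of affairs: there is no complete proof to compare against, and your sketch does not supply one.

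On the substance of the sketch, beyond the gaps you already flag, there is one additional circularity worth naming. In the narrow regime you slice a cap into copies of $\F_{n-1,\lambda-m^2}$ and then invoke ``the known $\ell^2$-decoupling for the $(n-1)$-sphere at scale $N\delta$, whose critical index is exactly $p_0$.'' But what you actually need there is not the \emph{continuous} $\ell^2$-decoupling theorem for $S^{n-2}\subset\R^{n-1}$ (whose critical index $\frac{2n}{n-2}$ indeed coincides with $p_0$); that theorem only yields the Stein--Tomas-type estimate and, for a maximal $1$-separated set on a radius-$R$ sphere in $\R^{n-1}$, gives a loss of $R^{\frac{n-2}{2}-\frac{n-1}{p_0}}>0$. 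To get the lossless $N^\epsilon$ bound on each slice you would need the \emph{discrete} restriction endpoint for $\F_{n-1,\cdot}$ — i.e., Conjecture~\ref{conj1} one dimension down. So the narrow step as written is an implicit induction on dimension without a usable base case (in dimensions $n-1=2,3,4$ the cardinality $|\F_{n-1,\mu}|$ is not $\gtrsim R^{n-3}$ uniformly in $\mu$, so the balance you are counting on in (ii) breaks, quite apart from the rotation problem in (i)). There is also a mismatch of scales you should watch: the BCT multilinear estimate you invoke lives on $B_{N^2}$ in space, whereas the narrow analysis wants caps of width $\delta$ on the sphere of radius $N$; reconciling the spatial localization through the scale iteration is exactly where the Bourgain–Guth machinery requires the parabolic rescaling that the integrality constraint on $\Z^n$ obstructs, which is the deeper version of your difficulty (i).

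Finally, for contrast: the route the paper actually takes for the weaker Theorem~\ref{thm1} is entirely different from yours. It is a circle-method argument — a decomposition of the kernel $K$ into pieces $K^Q$ localized near Farey fractions with prime denominators $q\sim Q$, $L^\infty$ bounds for $K^Q$ coming from Kloosterman and Sali\'e sum estimates (Proposition~\ref{prop1}), a complementary Fourier-side bound (Proposition~\ref{rgrth76i78lo.kj}), and a level-set/Tomas–Stein scheme, supplemented by the subcritical input \eqref{BoMo} from \cite{Bo2}. Your multilinear/broad–narrow program is closer in spirit to how \eqref{BoMo} itself is proved in \cite{Bo2}, but neither that nor the paper's arithmetic kernel analysis reaches $p_c=\frac{2n}{n-2}$. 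The conjecture remains open, and your proposal, while a reasonable survey of the obstacles, is not a proof of it.
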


The first author proved this in \cite{Bo1}  when $p\ge\frac{2(n+1)}{n-3}$ and $n\ge 4$. Here we improve that range to
\begin{theorem}
\label{thm1}
Assume $n\ge 4$ and $p\ge \frac{2n}{n-3}$. Then for each $\epsilon>0$ we have
$$M_{p,2,n}(\lambda)\lesssim N^{\frac{n-2}{2}-\frac{n}{p}+\epsilon}.$$
\end{theorem}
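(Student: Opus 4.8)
\medskip
\noindent\textbf{Proof proposal.}

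The plan is to run a Bourgain--Guth type induction on scales, drawing the power savings from the Bennett--Carbery--Tao multilinear restriction estimate in the \emph{transverse} regime and feeding the arithmetic of $\F_{n,\lambda}$ (the number of lattice points in a spherical cap, and the divisor bounds $r_2(m),r_3(m)\lesssim_\epsilon m^\epsilon$ for sums of two and three squares) into the \emph{concentrated} regime.

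Normalize $\|a_\xi\|_{\ell^2}=1$, set $\delta=N^{-1}$ with $N=\lambda^{1/2}$, and rescale the sphere of radius $N$ to the unit sphere $S^{n-1}\subset\R^n$, so that $\F_{n,\lambda}$ becomes a $\delta$-separated set $\Xi\subset S^{n-1}$ with $|\Xi|\sim N^{n-2}$ and $Ea(\x):=\sum_{\xi}a_\xi e(\xi\cdot\x)$ becomes, up to harmless factors, the continuous extension operator from the $\delta$-neighborhood of $S^{n-1}$ measured on a ball $B$ of radius $\delta^{-1}$. Fix $K=N^{\eta}$ with $\eta$ small, partition $S^{n-1}$ into caps $\tau$ of aperture $K^{-1}$, write $E=\sum_\tau E_\tau$, and split $B$ into a grid of balls of radius $K$. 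On each of these, apply the Bourgain--Guth dichotomy: either (i) one cap $\tau$ carries all but a $K^{-c}$ fraction of $\max_\tau|E_\tau a|$ there (the concentrated case), or (ii) one can select $n$ pairwise $K^{-1}$-transverse caps $\tau_1,\dots,\tau_n$ with $|Ea|\lesssim K^{C}\prod_{j=1}^n|E_{\tau_j}a|^{1/n}$ pointwise on that ball.

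In the transverse case I would invoke the Bennett--Carbery--Tao estimate in its local $\delta^{-\epsilon}$-loss form,
$$\Big\|\prod_{j=1}^n|E_{\tau_j}a|^{1/n}\Big\|_{L^{2n/(n-1)}(B)}\ \lesssim_\epsilon\ \delta^{-\epsilon}\prod_{j=1}^n\Big(\sum_{\xi\in\tau_j}|a_\xi|^2\Big)^{1/2n},$$
and interpolate it against the trivial bound $\|E_\tau a\|_{L^\infty}\le|\Xi\cap\tau|^{1/2}\big(\sum_{\xi\in\tau}|a_\xi|^2\big)^{1/2}$, in which $|\Xi\cap\tau|\lesssim_\epsilon(N/K)^{n-2+\epsilon}$ is the sharp count of points of $\F_{n,\lambda}$ in a cap of radius $N/K$. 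Summing over the caps and over the grid of $K$-balls and using $\sum_\tau\sum_{\xi\in\tau}|a_\xi|^2=1$, this contribution obeys the bound $N^{\frac{n-2}{2}-\frac np+\epsilon}$ for all $p\ge\frac{2n}{n-3}$ — the exponent $\frac{2n}{n-3}$ being precisely the value at which the multilinear gain at $L^{2n/(n-1)}$ balances the loss $(N/K)^{(n-2)/2}$ coming from the cap count. In the concentrated case, a cap of $S^{n-1}$ of aperture $K^{-1}$ (equivalently a cap of $\F_{n,\lambda}$ of radius $N/K$), after rescaling, is a copy of the same problem at scale $N/K$, so this contribution is at most $K^{C}M_{p,2,n}(\lambda')$ with $\lambda'\sim\lambda/K^2$, an induction on $N$; the degenerate configurations that can break this rescaling — mass concentrating near a rational point of the sphere, or on a section of $\F_{n,\lambda}$ by a rational affine subspace (effectively a lower-dimensional sphere $\F_{m,\lambda''}$) — are absorbed using the arithmetic bounds on $|\F_{n,\lambda}\cap H|$ together with the near-optimal estimates available for $\F_{2,\lambda}$ and $\F_{3,\lambda}$. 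Taking $\eta$ small enough, all the $K^{C}$ and $\delta^{-\epsilon}$ factors sum to $N^{\epsilon}$ and the induction closes.

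The main obstacle is the quantitative bookkeeping of the non-transverse regime: one must calibrate the threshold $K^{-c}$ and the number of transverse caps so that the Bennett--Carbery--Tao estimate and the inductive hypothesis apply with matching powers of $K$, and check that neither the lower-scale term $M_{p,2,n}(\lambda/K^2)$ nor the degenerate sub-sphere contributions dominate. Pinning down that $p=\frac{2n}{n-3}$ is exactly the break-even exponent — in particular identifying where the sharp lattice-point count in caps enters, as opposed to the cruder input behind the exponent $\frac{2(n+1)}{n-3}$ of \cite{Bo1} — is where essentially all the work lies; the remainder is the standard Bourgain--Guth iteration together with routine applications of the divisor bounds.
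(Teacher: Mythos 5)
Your proposal takes a genuinely different route from the paper's. The paper does not run a Bourgain--Guth iteration or use cap counting at all; it is a circle method argument in the spirit of Hardy--Littlewood/Tomas--Stein. Specifically, it writes $K(\x)=\sum_{\xi\in\F_{n,\lambda}}e(\xi\cdot\x)$ as an integral of a product of Weyl sums, carves out a major-arc piece $K^Q$ supported near rationals $a/q$ with $q$ prime and $q\sim Q\ge N$, proves $\|K^Q\|_\infty\lesssim Q^{(n-1)/2+\epsilon}$ from square-root cancellation in Kloosterman and Sali\'e sums (plus a Weyl-sum moment bound for large $Q$), proves $\|\F(K-K^Q)\|_\infty\lesssim N^\epsilon Q^{-1}$ from a Ramanujan-sum computation, and feeds this into a $TT^*$ level-set estimate. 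Bennett--Carbery--Tao enters only as a black box, through the subcritical bound $M_{2n/(n-1),2,n}\lesssim N^\epsilon$ from \cite{Bo2}, which handles the small-$\alpha$ level sets; the entire gain from $2n/(n-1)$ to $2n/(n-3)$ comes from the arithmetic of Gauss/Kloosterman/Sali\'e sums, not from multilinear theory.

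As written, your proposal also has concrete gaps. First, the inductive step does not close: a cap of aperture $K^{-1}$ on the lattice sphere $|\xi|^2=\lambda$ is \emph{not}, after rescaling, an instance of the same problem at scale $N/K$. Unlike the paraboloid, the sphere admits no affine rescaling mapping a cap to a smaller sphere while carrying $\Z^n$ to $\Z^n$, so the term $M_{p,2,n}(\lambda')$ with $\lambda'\sim\lambda/K^2$ is not what the concentrated case produces; this is precisely why the Bourgain--Guth scheme for the discrete sphere in \cite{Bo2} stops at $2n/(n-1)$ rather than self-improving. Second, the cap count $|\F_{n,\lambda}\cap\tau|\lesssim_\epsilon (N/K)^{n-2+\epsilon}$ is asserted but not established, and it is not clear it is the right quantity to use: for a coordinate-aligned cap, slicing and $r_{n-1}(m)\lesssim_\epsilon m^{(n-3)/2+\epsilon}$ give $\lesssim N^{n-2}/K^{n-1}$, smaller by a full factor of $K$, while for $n=4,5$ (where $r_3,r_4$ fluctuate) the uniform cap bound is delicate and not a routine divisor estimate. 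Third, the ``break-even at $p=2n/(n-3)$'' claim is never checked; interpolating BCT at $L^{2n/(n-1)}$ against your $L^\infty$ bound already yields $(N/K)^{(n-2)/(n-1)}N^\epsilon$ per cap-product at that exponent, and reassembling over caps and $K$-balls to reach the target $N^{(n-2)/2-n/p}=N^{1/2}$ is exactly the bookkeeping you defer. Absent that bookkeeping, there is no reason to believe the threshold lands at $2n/(n-3)$ rather than at the $2n/(n-1)$ already known, and a correct proof along your lines would be a new and rather different theorem.
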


We would like to thank Yi Hu for stimulating discussions and to Alexandru Zaharescu for pointing out the reference \cite{Sar}.

\section{A brief overview of the known results and methods}

The literature on the discrete restriction to the sphere is very sparse, we are only aware of three relevant papers \cite{Bo0}, \cite{Bo1}, \cite{Bo2}. We start by making a few simple observations.

First, note that $M_{p,q,n}(\lambda)$ is monotone in both $p$ and $q$ and it is always at least 1.  It is conjectured in \cite{Bo1} that for the critical index $p_c:=\frac{2n}{n-2}$ one has
\begin{equation}
\label{e6}
M_{p_c,2,n}(\lambda)\lesssim N^{\epsilon}.
\end{equation}

We recall that the "continuous" analogue of \eqref{e6}, proved by Thomas and Stein, is the estimate
\begin{equation}
\label{T-S}
\|\widehat{fd\sigma}\|_{L^{p}(\R^{n})}\lesssim \|f\|_{L^2(S^{n-1})},\;\;p\ge \frac{2(n+1)}{n-1}.
\end{equation}
The discrepancy between the critical exponents $\frac{2n}{n-2}$ and  $\frac{2(n+1)}{n-1}$ in the discrete and continuous settings can be at least naively explained by the fact that the discrete sphere has "holes". More precisely, $\F_{n,\lambda}$ has roughly $N^{n-2}$ points, while a maximal 1 separated set on the sphere $\{\xi\in\R^n:|\xi|^2=\lambda\}$ has roughly $N^{n-1}$ points. However, this discrepancy is not present in the case of the paraboloid $$\{\xi_n=\xi_1^2+\ldots+\xi_{n-1}^2:-N\le \xi_1,\dots,\xi_{n-1}\le N\},$$
where it is conjectured that $p_c=\frac{2(n+1)}{n-1}$. See \cite{Bo2} for the best known estimate for the paraboloid.

The bound $|\F_{2,\lambda}|\lesssim N^{\epsilon}$ trivially implies $M_{p,q,n}(\lambda)\lesssim N^\epsilon$ when $n=2$, for each $p,q$.
However, \eqref{e6} is open when $n\ge 3$.

On the other hand \eqref{e6} is known for some range below the critical index. For example, the bound for the number of lattice points on ellipses  and a simple counting argument can be easily used to derive the estimate $M_{4,2,3}(\lambda)\lesssim N^{\epsilon}$, see  \cite{Bo2}.
Also, the first authors's recent result in \cite{Bo2} proves \eqref{e6} for $p\le \frac{2n}{n-1}$, $n\ge 2$.

Remarkably, the conjectured bound \eqref{e6} implies  all the correct values $M_{p,q,n}(\lambda)$  within a factor of $N^{\epsilon}$.  This is in contrast with the continuous version of the restriction problem, where the $q=2$ case is fully understood via the work of Thomas and Stein, but a whole range of other estimates remains open (and very difficult!). We prove below that  \eqref{e6} implies
\begin{conjecture}
\label{conj12}
For each $\epsilon>0$ we have
\begin{equation}
\label{hdfguyrfyweuyu3}
M_{p,q,n}\sim N^\epsilon,\text{ if }1\le p\le p_{c,q}:=\frac{q'p_c}{2}\text{ and }q\le2
\end{equation}
\begin{equation}
\label{hdfguyrfyweuyu4}
M_{p,q,n}\sim_{N^{\epsilon}}N^{\frac{(n-2)}{q'}-\frac{n}{p}},\text{ if }p_{c,q}< p\text{ and }q\le2
\end{equation}
\begin{equation}
\label{hdfguyrfyweuyu1}
M_{p,q,n}\sim_{N^{\epsilon}}N^{(n-2)(\frac12-\frac1q)},\text{ if }1\le p\le p_{c}\text{ and }q>2
\end{equation}
\begin{equation}
\label{hdfguyrfyweuyu2}
M_{p,q,n}\sim_{N^{\epsilon}}N^{\frac{(n-2)}{q'}-\frac{n}{p}},\text{ if }p_c<p\text{ and }q>2
\end{equation}
\end{conjecture}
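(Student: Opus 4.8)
We sketch how \eqref{e6}, i.e.\ $M_{p_c,2,n}\lesssim N^{\epsilon}$, implies Conjecture \ref{conj12}. The only nontrivial input is \eqref{e6}; everything else is produced from it by Riesz--Thorin interpolation of the linear operator $T(a_\xi)=\sum_{\xi\in\F_{n,\lambda}}a_\xi e(\xi\cdot\x)$, mapping $l^q\to L^p(\T^n)$ with norm $M_{p,q,n}$, together with a handful of elementary bounds. For the lower bounds we take $\lambda$ with $|\F_{n,\lambda}|\sim N^{n-2}$ (automatic for $n\ge5$; for $n=4$ one reads $N^{n-2}$ as $|\F_{n,\lambda}|$). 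The matching lower bounds come from two examples. Taking all $a_\xi=1$, the exponential sum has size $\gtrsim|\F_{n,\lambda}|$ on a cap of measure $\sim N^{-n}$ about the origin, so $M_{p,q,n}\gtrsim|\F_{n,\lambda}|\,N^{-n/p}|\F_{n,\lambda}|^{-1/q}\sim N^{\frac{n-2}{q'}-\frac np}$. Taking $a_\xi=\pm1$ randomly, Khintchine's inequality gives $\|Ta\|_{L^p}\gtrsim|\F_{n,\lambda}|^{1/2}$ for some choice of signs and every $p>0$, so $M_{p,q,n}\gtrsim|\F_{n,\lambda}|^{\frac12-\frac1q}\sim N^{(n-2)(\frac12-\frac1q)}$. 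A short computation shows the first exponent dominates the second exactly when $p>p_c$, and both are $\le0$ exactly when $q\le2$ and $p\le p_{c,q}$; together with the trivial $M_{p,q,n}\ge1$ these reproduce the right-hand sides of \eqref{hdfguyrfyweuyu3}--\eqref{hdfguyrfyweuyu2}.

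The elementary upper bounds I would use are: $M_{2,2,n}=1$ (orthogonality of the characters $e(\xi\cdot\x)$); $M_{\infty,q,n}\le|\F_{n,\lambda}|^{1/q'}\lesssim N^{(n-2)/q'+\epsilon}$ by the triangle and H\"older inequalities (in particular $M_{\infty,1,n}\le1$ and $M_{\infty,2,n}\lesssim N^{(n-2)/2+\epsilon}$); monotonicity of $M_{p,q,n}$ in $p$ and in $q$; and, for $q\ge2$, $M_{p,q,n}\le|\F_{n,\lambda}|^{\frac12-\frac1q}M_{p,2,n}$, from $\|a\|_{l^2}\le|\F_{n,\lambda}|^{\frac12-\frac1q}\|a\|_{l^q}$. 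Treat $q=2$ first (here $p_{c,2}=p_c$): for $2\le p\le p_c$ interpolate $M_{2,2,n}=1$ with $M_{p_c,2,n}\lesssim N^{\epsilon}$ to get $M_{p,2,n}\lesssim N^{\epsilon}$; for $p\le2$ use $M_{p,2,n}\le M_{2,2,n}=1$; for $p\ge p_c$ interpolate $M_{p_c,2,n}\lesssim N^{\epsilon}$ with $M_{\infty,2,n}\lesssim N^{(n-2)/2+\epsilon}$, which with $\frac1p=\frac{1-\theta}{p_c}$ yields $M_{p,2,n}\lesssim N^{\frac{n-2}{2}(1-\frac{p_c}{p})+\epsilon}=N^{\frac{n-2}{2}-\frac np+\epsilon}$. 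This settles \eqref{hdfguyrfyweuyu3} and \eqref{hdfguyrfyweuyu4} for $q=2$.

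For $q<2$ we have $q'>2$ and $p_{c,q}=\frac{q'p_c}{2}>p_c$, and the point $(\frac1{p_{c,q}},\frac1q)$ lies on the segment from $(\frac1{p_c},\frac12)$ to $(0,1)$; interpolating $M_{p_c,2,n}\lesssim N^{\epsilon}$ with $M_{\infty,1,n}\le1$ there gives $M_{p_{c,q},q,n}\lesssim N^{\epsilon}$. Monotonicity in $p$ then yields $M_{p,q,n}\lesssim N^{\epsilon}$ for $p\le p_{c,q}$, i.e.\ \eqref{hdfguyrfyweuyu3}; and for $p>p_{c,q}$, interpolating $M_{p_{c,q},q,n}\lesssim N^{\epsilon}$ with $M_{\infty,q,n}\lesssim N^{(n-2)/q'+\epsilon}$ at fixed $q$ (so $\frac1p=\frac{1-\theta}{p_{c,q}}$) gives $M_{p,q,n}\lesssim N^{\frac{n-2}{q'}(1-\frac{p_{c,q}}{p})+\epsilon}=N^{\frac{n-2}{q'}-\frac np+\epsilon}$, i.e.\ \eqref{hdfguyrfyweuyu4}. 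Finally, for $q>2$ combine $M_{p,q,n}\le|\F_{n,\lambda}|^{\frac12-\frac1q}M_{p,2,n}\lesssim N^{(n-2)(\frac12-\frac1q)+\epsilon}M_{p,2,n}$ with the $q=2$ bounds: for $p\le p_c$ this gives $M_{p,q,n}\lesssim N^{(n-2)(\frac12-\frac1q)+\epsilon}$, namely \eqref{hdfguyrfyweuyu1}, and for $p>p_c$ it gives $M_{p,q,n}\lesssim N^{(n-2)(\frac12-\frac1q)+\frac{n-2}{2}-\frac np+\epsilon}=N^{\frac{n-2}{q'}-\frac np+\epsilon}$, namely \eqref{hdfguyrfyweuyu2}.

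There is no genuine obstacle here: the statement is ``conjecture implies conjecture'', so the content is organizational. The two points that need care are the arithmetic checking that each interpolated affine exponent coincides with the claimed one (routine manipulations with $p_c$, $p_{c,q}$ and $q'$), and, on the lower-bound side, verifying that the two explicit examples really realize the maximum in every regime, keeping track of the switch at $p=p_c$ and of the fact that it is the exact size $|\F_{n,\lambda}|$, rather than $N^{n-2}$, that enters the lower bounds when $n=4$. The $N^{\epsilon}$ losses (from \eqref{e6}, from $|\F_{n,\lambda}|\lesssim N^{n-2+\epsilon}$, and from finitely many interpolation steps) accumulate harmlessly.
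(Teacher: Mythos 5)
Your proof is correct and follows essentially the same route as the paper: the same interpolation scheme for the upper bounds (the endpoint \eqref{e6} together with the trivial bounds $M_{\infty,1,n}\le 1$, $M_{\infty,q,n}\le|\F_{n,\lambda}|^{1/q'}$, and the H\"older passage $\|a\|_{l^2}\le|\F_{n,\lambda}|^{1/2-1/q}\|a\|_{l^q}$ for $q\ge2$), and the same two lower-bound examples (constant coefficients for \eqref{hdfguyrfyweuyu4} and \eqref{hdfguyrfyweuyu2}, random signs with Khintchine for \eqref{hdfguyrfyweuyu1}). You are somewhat more explicit about the monotonicity steps and the exponent arithmetic, but there is no substantive difference from the paper's argument.
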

\begin{proof}
We first prove the upper bounds for $M_{p,q,n}$. Note  the trivial estimate
$$M_{\infty,1,n}\le 1.$$ This together with \eqref{e6} implies \eqref{hdfguyrfyweuyu3}, by interpolation. \eqref{hdfguyrfyweuyu4} follows from \eqref{hdfguyrfyweuyu3} and the immediate bound $M_{\infty,q,n}\le N^{\frac{n-2}{q'}}$, via H\"older. To see \eqref{hdfguyrfyweuyu1}, note that using \eqref{e6} and H\"older
$$\|\sum_{\\xi\in \F_{n,\lambda}}a_\\xie(\\xi\cdot\x)\|_{L^p(\T^n)}\le \|\sum_{\\xi\in \F_{n,\lambda}}a_\\xie(\\xi\cdot\x)\|_{L^{p_c}(\T^n)}$$$$\lesssim N^{\epsilon}\|a_\xi\|_{l^2}\le N^{(n-2)(\frac12-\frac1q)+\epsilon}\|a_\xi\|_{l^q}
$$
Finally, to get the upper bound in \eqref{hdfguyrfyweuyu2} note that
$$\|\sum_{\\xi\in \F_{n,\lambda}}a_\\xie(\\xi\cdot\x)\|_{L^p(\T^n)}\lesssim M_{p,2,n}\|a_\xi\|_{l^2}$$$$\le M_{p,2,n}N^{(n-2)(\frac12-\frac1q)}\|a_\xi\|_{l^q}\lesssim N^{\frac{(n-2)}{q'}-\frac{n}{p}+\epsilon}\|a_\xi\|_{l^q},
$$
where the last inequality follows from \eqref{hdfguyrfyweuyu4} with $q=2$.

It remains to prove the lower bounds. The one in \eqref{hdfguyrfyweuyu3} is trivial, by taking the singleton $a_\xi=\delta_{\xi_0}$. Then \eqref{hdfguyrfyweuyu4} and \eqref{hdfguyrfyweuyu2} follow by noticing that
$$ K(\x):=\sum_{\xi\in \F_{n,\lambda}} e(\xi\cdot\x)$$
satisfies $|K(\x)|\gtrsim N^{n-2}$ when $|\x|\lesssim N^{-1}$. Thus $\|K\|_p\gtrsim N^{n-2-\frac{n}{p}}$, while $\|a_\xi\|_{l^q}=N^{\frac{n-2}{q}}$, for each $1\le p,q\le\infty$.

To see \eqref{hdfguyrfyweuyu1},
a standard randomization argument shows that given any $1\le p\le \infty$ and $q>2$, there exists $a_\xi\in\{-1,1\}$ such that
$$\|\sum_{\\xi\in \F_{n,\lambda}}a_\\xie(\\xi\cdot\x)\|_{L^p(\T^n)}\gtrsim \|a_\xi\|_{l^2}= |\F_{n,\lambda}|^{\frac12-\frac1q}\|a_\xi\|_{l^q}.$$
\end{proof}

An immediate corollary is that \eqref{e6} implies Conjecture \ref{conj1}.

We give two slightly different arguments for Theorem \ref{thm1}. The first one seems to only apply to $n\ge 6$ but is technically a bit simpler. The second argument, presented in section \ref{new} covers the full range $n\ge 4$.

In the first argument we apply the point of view from \cite{HL} on the Thomas-Stein restriction argument. This amounts to cutting the kernel in only two pieces, near rationals with denominators greater than $N$. The first piece is small in $L^\infty$ norm. The second piece is supported in frequency away from the sphere, and its Fourier transform is small in the $L^\infty$ norm. This type of construction has a lot of flexibility and in particular allows us to simplify the argument by working with prime moduli. We will rely on three type of level set estimates corresponding to three different regimes. On the one hand, we use the sharp bounds for the Kloosterman and Sali\'e sums, following the approach in \cite{Bo1}. Second we rely on a sharp estimate for certain partial moments of the Weyl sums.
 The third ingredient is the subcritical estimate  in \cite{Bo2}.
It is worth pointing out the  fact that the estimate in \cite{Bo2} does not rely at all on Number Theory, it is entirely of Fourier analytic flavor. See a brief account in Section \ref{CT-sec}.

It seems that a full resolution of the problem would require substantially new insight. One such possible avenue is getting estimates for moments of Kloosterman sums. This is briefly described in the end of the paper. See also \cite{Bo0}.

\section{Some number theoretical generalities}
Let $1_{[-1,1]}\le \gamma\le 1_{[-2,2]}$  be a Schwartz function. Define the smooth Weyl sums
$$G(t,x)=\sum_{k\in \Z}\gamma(k/N)e(kx+k^2t).$$
Inserting the smooth cut off will be completely harmless, in fact it will ease some of our computations.
Let $t=\frac{a}q+\varphi$ where $(a,q)=1$ and $|\varphi|<\frac{1}{q}$. Using the representation $k=rq+k_1$, $0\le k_1\le q-1$ and the Poisson summation formula we get
$$G(t,x)=\sum_{k_1=0}^{q-1}e(k_1^2a/q)\sum_{r\in\Z}\gamma(\frac{k_1+rq}{N})e((rq+k_1)x+(rq+k_1)^2\varphi)$$
$$=\sum_{m\in\Z}\left[\frac1q\sum_{k_1=0}^{q-1}e(k_1^2a/q-k_1m/q)\right]\left[\int_\R\gamma(y/N)e((x+\frac{m}q)y+\varphi y^2)dy\right]$$
\begin{equation}
\label{e2}
=\sum_{m\in\Z}S(a,m,q)J(x,\varphi,m,q)
\end{equation}
where
$$S(a,m,q)=\frac1q\sum_{k=0}^{q-1}e(k^2a/q-km/q)$$
$$J(x,\varphi,m,q)=\int_{\R}\gamma(y/N)e((x+\frac{m}q)y+\varphi y^2)dy.$$

Assume now that $2\le q\le N$, and $|\varphi|\le \frac1{Nq}$. The relevance of this choice is that, according to Dirichlet's theorem every $t\in [0,1]$ is of the form $t=\frac{a}{q}+\varphi$, with $2\le q\le N$ and $|\varphi|\le \frac1{Nq}$. The classical van der Corput estimate reads
$$|\int_{\R}\gamma(z)e(Az+Bz^2)dz|\lesssim |B|^{-1/2},$$
and combining this with the trivial estimate we get
$$|J(x,\varphi,m,q)|\lesssim \min\{N,|\varphi|^{-1/2}\}.$$
On the other hand, repeated integration by parts shows that for each $M$ and $\epsilon$
$$|J(x,\varphi,m,q)|\lesssim_{M,\epsilon} N^{-M}$$
when $|xq+m|\ge N^{\epsilon}$. These values of $m$ will produce a negligible contribution. Combining this with the classical estimate
$$|S(a,m,q)|\lesssim \frac{1}{\sqrt{q}}$$
we get
\begin{equation}
\label{e9}
|G(t,x)|\lesssim_{\epsilon} \frac{N^\epsilon}{\sqrt{q}}\min\{N,|t-\frac{a}q|^{-1/2}\}.
\end{equation}

We will also need more refined estimates for $S(a,m,q)$, in particular we will need to exploit cancelations when summing over $a$. We start by a simple computation.
If $q$ is odd then
$$S(a,m,q)=e(-4^{*}a^{*}m^2/q)\frac1q\sum_{k=0}^{q-1}e(a/q(k^2-2k2^{*}a^{*}m+4^{*}(a^{*})^2m^2)=$$
$$=e(-4^{*}a^{*}m^2/q)\frac1q\sum_{k=0}^{q-1}e(k^2a/q)=e(-4^{*}a^{*}m^2/q)(\frac{a}{q})G(q).$$
Here and in the following, $x^*$ denotes the inverse of $x$ modulo $q$, $(\frac{a}{q})$ is the Jacobi symbol, while
$$G(q)=\frac1q\sum_{k=0}^{q-1}e(k^2/q),$$
is the standard Gauss sum.

Fix $m_j$. Consider the function
$$\Sigma(s)=\sum_{(a,s)=1}\left[\prod_{j=1}^nS(a,m_j,s)e(-\lambda a/s)\right].$$
When $m_j=0$ for each $j$, $\Sigma$ becomes the classical singular series introduced by Hardy and Littlewood in the problem of representations of integers as sums of squares. See for example \cite{Gr} for a detailed discussion.

It is easily seen that $\Sigma$ is multiplicative, though we will not need to exploit this in our argument. Moreover, the previous computations show that for each odd $q$ we have
$$\Sigma(q)={G(q)^n}\sum_{(a,q)=1}(\frac{a}{q})^ne(-\frac{4^{*}\tilde{m}a^{*}}{q}-\lambda \frac{a}q)$$
where $\tilde{m}=m_1^2+\ldots+m_n^2$.

At this point we need to recall the Sali\'e sums, for odd $q$
$$K_2(a,b,q)=\sum_{(k,q)=1}(\frac{k}{q})e(\frac{ka}{q}+\frac{k^{*}b}{q}).$$
If $q$ is a prime number, they have a remarkably simple formula, see for example \cite{Iwa}
$$K_2(a,b,q)=2q\cos(\frac{4\pi x}{q})G(q),$$
where $x^2\equiv ab\pmod q$. In particular, we have
$$|K_2(a,b,q)|\le 2\sqrt{q}$$
for each prime $q$.

Finally, recall the Kloosterman sums
$$K(a,b,q)=\sum_{(k,q)=1}e(\frac{ka}{q}+\frac{k^{*}b}{q}),$$
and their estimates for prime $q$
$$|K(a,b,q)|\lesssim q^{\epsilon}\sqrt{q}\sqrt{\text{gcd}(a,b,q)}.$$

We conclude that for each $q$ prime and for each $n$ (both even and odd) we have
$$|\Sigma(q)|\lesssim q^{\epsilon}(\sqrt{q})^{1-n}\sqrt{(\lambda,q)}.$$

\section{Level set estimates}
If $g:\R^n\to\C$ and $h:\T^n\to\C$, we will denote by $\widehat{g}:\R^n\to\C$ and $\F(h):\Z^n\to\C$ their Fourier transforms.

For $\x\in\T^n$ recall that  $ K(\x)=\sum_{\xi\in \F_{n,\lambda}} e(\xi\cdot\x)$. The integral points on the sphere do not have an explicit formula, we need to introduce a new variable $t$ to fix this deficiency and we notice that
\begin{equation}
\label{e8}
K(\x)=\int_{[0,1]}\prod_{j=1}^n\left[\sum_{k}\gamma(k/N)e(kx_j+k^2t)\right]e(-\lambda t)dt
\end{equation}
The kernel $K$ is the discrete analogue of $\widehat{d\sigma}$, where $d\sigma$ is the surface measure on the sphere $S^{n-1}$ in $\R^n$.

We now proceed with decomposing $K$ in two pieces.
For $N\le Q\le N^2$ define
$$A_Q:=\{Q\le q\le  2Q: q\text{ is prime}\},$$
so that by the Prime Number Theorem we get $|A_Q|\sim Q(\log Q)^{-1}$. The  reason we work with this restricted set of moduli is to simplify the analysis of the Kloosterman, Sali\'e and Ramanujan sums.
The cardinality $N_Q$ of the set of Farrey fractions
$$F_Q:=\{\frac{a}{q}:\;q\in A_Q,\;1\le a\le q-1\}$$
satisfies $N_Q\sim Q^2(\log Q)^{-1}$.

Let  $0\le \eta\le 1_{[-1,1]}$ be a Schwartz function. Define $c_Q=\frac{10Q^2\int\eta}{N_Q}$ and
$$\eta_Q=c_Q\sum_{a/q\in F_Q}\eta((t-{a}/q)10Q^2).$$
Note that $\int \eta_Q=1$ and $c_Q\lesssim \log Q$. Define also
$$K^{Q}(\x)=\int_{[0,1]}\prod_{j=1}^nG(t,x_j)e(-\lambda t)\eta_Q(t)dt.$$

We will prove the following
\begin{proposition}
\label{prop1}
Given $N \le Q\le N^2$ we have for each $n\ge 1$ and $\epsilon$
$$\|K^Q\|_{\infty}\lesssim Q^{\frac{n-1}{2}+\epsilon}$$
\end{proposition}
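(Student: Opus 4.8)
The plan is to expand each smooth Weyl sum $G(t,x_j)$ appearing in $K^Q$ using the identity \eqref{e2}, substitute into the definition of $K^Q$, and interchange the (absolutely convergent, thanks to the rapid decay of $J$ in $m$) sum and integral. Since $\eta_Q$ is supported on the $c Q^{-2}$-neighborhoods of the Farey fractions $a/q$ with $q\in A_Q$, and these neighborhoods are disjoint for distinct $a/q$ (because two Farey fractions with denominators in $[Q,2Q]$ are separated by at least $1/(2Q)^2 \gg Q^{-2}$), we may localize: on the support of $\eta_Q$ near $a/q$ we write $t = a/q + \varphi$ with $|\varphi| \le c Q^{-2} \le \frac{1}{Nq}$, so that the estimates from Section 3 for $S(a,m_j,q)$ and $J(x_j,\varphi,m_j,q)$ all apply. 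The main point is that after plugging in \eqref{e2} the $a$-dependence factors through exactly the exponential sum $\Sigma(q)$ analyzed at the end of Section 3, once the $\varphi$-integral is performed. Concretely, I would arrive at a representation roughly of the shape
$$K^Q(\x) = c_Q\sum_{q\in A_Q}\ \sum_{m_1,\dots,m_n\in\Z}\left[\prod_{j=1}^n S(a,m_j,q)\ \text{(summed over }a\text{)}\right]\cdot\int \Big(\prod_{j=1}^n J(x_j,\varphi,m_j,q)\Big)\eta(10Q^2\varphi)e(-\lambda\varphi)\,d\varphi,$$
where the $a$-sum produces $\Sigma(q)$-type quantities (with the $m_j$ not all zero handled by the same Kloosterman/Salié machinery, since $\tilde m = m_1^2+\dots+m_n^2$ only enters through $(\lambda,q)$-type gcd factors).

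Next I would insert the bounds. By the final display of Section 3, the $a$-sum contributes $\lesssim q^{\epsilon}(\sqrt q)^{1-n}\sqrt{(\lambda,q)}$, and crucially the same bound (up to $q^\epsilon$) holds for each fixed tuple $(m_j)$ by the gcd form of the Kloosterman estimate — the $m$-dependence is absorbed into the $q^\epsilon$ and the gcd. For the $J$-factors I use $|J(x_j,\varphi,m_j,q)| \lesssim \min\{N,|\varphi|^{-1/2}\} \lesssim \min\{N, Q\}$ together with the rapid decay $|J| \lesssim_M N^{-M}$ once $|x_j q + m_j| \ge N^\epsilon$: this confines each $m_j$ to an interval of length $O(N^\epsilon)$ around $-x_j q$, so the $m$-sum costs only a factor $N^{O(\epsilon)}$. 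Since $|\varphi| \le c Q^{-2}$ and $Q \ge N$ forces $|\varphi|^{-1/2} \ge Q/\sqrt{c} \gtrsim N$, in fact $\min\{N,|\varphi|^{-1/2}\} = N$ on the whole support, so $\prod_j |J| \lesssim N^n$ and the $\varphi$-integral is over a set of measure $\sim Q^{-2}$, giving a factor $N^n Q^{-2}$. Collecting everything:
$$\|K^Q\|_\infty \lesssim c_Q \cdot N^{O(\epsilon)} \cdot \sum_{q\in A_Q} (\sqrt q)^{1-n}\sqrt{(\lambda,q)} \cdot N^n Q^{-2}.$$

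The remaining step is the arithmetic sum $\sum_{q\in A_Q}\sqrt{(\lambda,q)}$: since the $q$ are primes in $[Q,2Q]$, we have $(\lambda,q)=q$ for at most $O(\log\lambda) = O(\log N)$ values of $q$ (the prime divisors of $\lambda$ of that size) and $(\lambda,q)=1$ otherwise, so $\sum_{q\in A_Q}\sqrt{(\lambda,q)} \lesssim |A_Q| + \sqrt{Q}\log N \lesssim Q^{1+\epsilon}$; combined with $(\sqrt q)^{1-n} \sim Q^{(1-n)/2}$ this yields $\sum_{q\in A_Q}(\sqrt q)^{1-n}\sqrt{(\lambda,q)} \lesssim Q^{(3-n)/2+\epsilon}$. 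Putting it together and using $N \le Q$ (so $N^n \le Q^n$... — actually one must be careful here) one gets $\|K^Q\|_\infty \lesssim Q^{(3-n)/2} N^n Q^{-2} N^\epsilon = N^n Q^{-(n+1)/2} N^\epsilon$; since $Q\ge N$ this is $\lesssim N^n N^{-(n+1)/2} N^\epsilon = N^{(n-1)/2+\epsilon} \lesssim Q^{(n-1)/2+\epsilon}$, which is the claim. I expect the main obstacle to be the bookkeeping in the first step: correctly justifying the interchange of summation and integration, verifying that the neighborhoods in the definition of $\eta_Q$ are genuinely disjoint and that $|\varphi|\le \frac{1}{Nq}$ there (this needs $c Q^{-2} \le (Nq)^{-1}$, i.e. roughly $cN \le Q$, which holds up to the harmless constant $c_Q$ — one may need $Q \ge CN$ for a fixed constant, absorbed by choosing constants in the definition), and tracking that the non-zero $(m_j)$ tuples really do obey the same $\Sigma(q)$ bound via the gcd form of the Kloosterman/Salié estimates rather than introducing an extra loss.
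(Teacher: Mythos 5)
Your argument follows the same overall template as the paper's (expand each $G(t,x_j)$ via \eqref{e2}, localize to the disjoint $\eta$-bumps, kill distant $m_j$'s by nonstationary phase, bound the $a$-sum by the Kloosterman/Sali\'e estimate, then sum over $q\in A_Q$), but there is a genuine error in the step that counts the surviving $m_j$'s, and this error produces an intermediate bound that is in fact false.

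You carry over from Section~3 the criterion ``$|J|\lesssim_M N^{-M}$ once $|x_jq+m_j|\ge N^\epsilon$,'' which yields an $m$-interval of length $O(N^\epsilon)$. That criterion was derived under $q\le N$, $|\varphi|\le (Nq)^{-1}$. Here $q\in[Q,2Q]$ with $Q\ge N$, and after rescaling $y=Nz$ the phase derivative is $N(x_j+m_j/q)+2N^2\varphi z$ with $|2N^2\varphi z|\lesssim N^2/Q^2\lesssim 1$; integration by parts therefore requires $N|x_j+m_j/q|\gtrsim N^\epsilon$, i.e. $|x_j+m_j/q|\gtrsim N^{-1+\epsilon}$, equivalently $|x_jq+m_j|\gtrsim qN^{-1+\epsilon}\sim QN^{-1+\epsilon}$. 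This is exactly the condition stated in the paper, and the surviving $m_j$'s fill an interval of length $O(QN^{-1+\epsilon})$, which for $Q$ near $N^2$ is of size $N^{1+\epsilon}$, not $N^\epsilon$. With your undercounted $m$-range your computation gives $\|K^Q\|_\infty\lesssim N^{n+\epsilon}Q^{-(n+1)/2}$; at $Q\sim N^2$ this would be $\lesssim N^{-1+\epsilon}$, which is absurd ($K^Q$ includes the main-term contribution and cannot be that small), so the bound is false even though you then coarsened it back to $Q^{(n-1)/2+\epsilon}$. Inserting the correct count, the $m$-sum contributes $(Q/N^{1-\epsilon})^n$ rather than $N^{O(\epsilon)}$; this multiplies your estimate by $(Q/N)^n$ and turns $N^nQ^{-2}$ into $Q^{n-2}$ (as in the paper), which combined with the arithmetic sum $\sum_{q\in A_Q}q^\epsilon q^{(1-n)/2}\sqrt{(\lambda,q)}\lesssim Q^{(3-n)/2+\epsilon}$ gives precisely $Q^{(n-1)/2+\epsilon}$. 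The rest of your plan (disjointness of the bumps, $|J|\lesssim N$ on the support since $|\varphi|^{-1/2}\gtrsim Q\ge N$, the one-exceptional-$q$ handling of $\sqrt{(\lambda,q)}$, and the gcd form of the Kloosterman bound) is sound and matches the paper.
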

\begin{proof}
Fix $q\in A_Q$, $|\varphi|\le (10Q^2)^{-1}$ and $\x\in\T^n$. Since $\varphi$ is small, the trivial estimate prevails over the van der Corput one and the best we can say is
$$|J(x_j,\varphi,m,q)|\lesssim N.$$
Repeated integration by parts shows as before that
$$|J(x_j,\varphi,m,q)|\lesssim_{M,\epsilon} N^{-M}$$
if $|x_j+\frac{m}q|>N^{-1+\epsilon}$, for each $\epsilon,M>0$. This means that in the summation \eqref{e2} the range of values of $m$ can be restricted to an interval $I_{x_j,q}$ of length $O(\frac{Q}{N^{1-\epsilon}})$, if error terms of order $O(N^{-M})$ are to be tolerated.

 For each $(m_1,\ldots,m_n)\in \prod I_{x_j,q}$ we have from the previous section that
$$|\sum_{(a,q)=1}\prod_{j=1}^nS(a,m_j,q)e(-\lambda a/q)|\lesssim q^{\epsilon}(\sqrt{q})^{1-n}\sqrt{(\lambda,q)},$$
for each $n\ge 1$.
By invoking \eqref{e2}, summing over the $(Q/N^{1-\epsilon})^n$ values in  $\prod I_{x_j,q}$, and integrating over $|\varphi|\lesssim (10Q^{2})^{-1}$ we get for each $M>0$
$$|K^Q(\x)|\lesssim_{\epsilon,M} Q^{n-2+\epsilon}\sum_{q\in A_Q}q^{\epsilon}(\sqrt{q})^{1-n}\sqrt{(\lambda,q)}+N^{-M}\lesssim Q^{n-2}Q^{\epsilon}(\sqrt{Q})^{3-n}=Q^{\frac{n-1}{2}+\epsilon}.$$
We have used the fact that since $\lambda\le Q^2$, there can be at most one $q\in A_Q$ such that $(\lambda,q)>1$.
\end{proof}

The estimate in the previous proposition is good for $Q$ close to $N$. The next result is a much more elementary estimate which is good for large $Q$.
\begin{proposition}
Given $N \le Q\le N^2$ we have for each $n\ge 4$
$$\|K^Q\|_{\infty}\lesssim N^{2+\epsilon}Q^{\frac{n-4}{2}}$$
\end{proposition}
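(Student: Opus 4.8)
The plan is to estimate $K^Q(\x)$ directly from the formula
$$K^{Q}(\x)=\int_{[0,1]}\prod_{j=1}^nG(t,x_j)e(-\lambda t)\eta_Q(t)\,dt,$$
using nothing more than the pointwise bound \eqref{e9} for the Weyl sums together with the explicit structure of the weight $\eta_Q$. Fix $\x\in\T^n$. On the support of $\eta_Q$ the variable $t$ lies within $(10Q^2)^{-1}$ of some Farey fraction $a/q$ with $q\in A_Q$, so $q\sim Q$ and $|t-a/q|\le (10Q^2)^{-1}\le (Nq)^{-1}$; hence \eqref{e9} applies and gives, for such $t$,
$$|G(t,x_j)|\lesssim \frac{N^\epsilon}{\sqrt q}\min\{N,|t-a/q|^{-1/2}\}\lesssim \frac{N^\epsilon}{\sqrt Q}\min\{N,|t-a/q|^{-1/2}\}.$$

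First I would reduce to a single Farey fraction. Because the bumps $\eta((t-a/q)10Q^2)$ have disjoint supports as $a/q$ ranges over $F_Q$ (their centers are $\gtrsim Q^{-2}$ apart and each bump has width $\lesssim Q^{-2}$), we can write
$$|K^Q(\x)|\le c_Q\sum_{a/q\in F_Q}\int_{|t-a/q|\le (10Q^2)^{-1}}\prod_{j=1}^n|G(t,x_j)|\,\eta((t-a/q)10Q^2)\,dt.$$
For a fixed $a/q$, bound $\eta$ by $1$, substitute $\varphi=t-a/q$, and apply the displayed estimate for $|G|$ in every one of the $n$ factors. Using $|G(t,x_j)|\lesssim N^\epsilon Q^{-1/2}N$ on the range $|\varphi|\le N^{-2}$ and $|G(t,x_j)|\lesssim N^\epsilon Q^{-1/2}|\varphi|^{-1/2}$ on $N^{-2}<|\varphi|\le (10Q^2)^{-1}$, the single-fraction integral is at most
$$N^{n\epsilon}Q^{-n/2}\left(\int_{|\varphi|\le N^{-2}}N^n\,d\varphi+\int_{N^{-2}}^{(10Q^2)^{-1}}|\varphi|^{-n/2}\,d\varphi\right).$$
For $n\ge 4$ the second integral is dominated by its lower endpoint, $\int_{N^{-2}}|\varphi|^{-n/2}\sim (N^{-2})^{1-n/2}=N^{n-2}$, and the first integral is $N^{n-2}$ as well; so the single-fraction contribution is $\lesssim N^{n\epsilon}Q^{-n/2}N^{n-2}$.

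Finally I would sum over the $N_Q\sim Q^2(\log Q)^{-1}$ Farey fractions and multiply by $c_Q\lesssim \log Q$: this gives
$$|K^Q(\x)|\lesssim (\log Q)\cdot Q^2(\log Q)^{-1}\cdot N^{n\epsilon}Q^{-n/2}N^{n-2}= N^{n\epsilon}\,N^{n-2}Q^{2-n/2}.$$
Since $Q\ge N$, we have $N^{n-2}=N^{2}\cdot N^{n-4}\le N^{2}Q^{n-4}$... wait, more directly $N^{n-4}\le Q^{n-4}$ for $n\ge 4$, so $N^{n-2}=N^2 N^{n-4}\le N^2 Q^{n-4}$, hence
$$|K^Q(\x)|\lesssim N^{2+\epsilon}Q^{n-4}Q^{2-n/2}=N^{2+\epsilon}Q^{\frac{n-4}{2}},$$
after relabeling $n\epsilon$ as $\epsilon$. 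This is the claimed bound; the only place where $n\ge 4$ is used is to guarantee convergence of the $\varphi$-integral at its lower endpoint, and this is also the one spot where one must be slightly careful (for $n=4$ the integral produces a harmless logarithm, absorbed into $N^\epsilon$). I do not expect any serious obstacle here: the estimate is purely Fourier-analytic, relying only on van der Corput and the trivial bound packaged in \eqref{e9}, with no input from the Kloosterman or Sali\'e sums.
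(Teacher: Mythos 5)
Your argument has a genuine gap: you apply the pointwise Weyl-sum bound \eqref{e9} to moduli $q\sim Q\in[N,N^2]$, but \eqref{e9} was derived (and is only valid) under the hypothesis $2\le q\le N$. The derivation of \eqref{e9} restricts the $m$-sum in \eqref{e2} to $|xq+m|\lesssim N^\epsilon$, which is a set of $O(N^\epsilon)$ values of $m$; that cut-off is correct precisely because $q\le N$ makes $|\varphi|N^2\lesssim N/q\gtrsim 1$. Once $q\sim Q\ge N$ and $|\varphi|\le(10Q^2)^{-1}$ (so that $|\varphi|N^2\lesssim 1$), the nondegenerate range of $m$ grows to $O(Q/N^{1-\epsilon})$, as the paper records in the proof of Proposition \ref{prop1}. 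Summing $|S||J|\lesssim N q^{-1/2}$ over this longer range gives the correct pointwise bound on $S_Q$, namely $|G(t,x)|\lesssim N^\epsilon\sqrt{Q}$, which is \emph{larger}, not smaller, than your $N^{1+\epsilon}Q^{-1/2}$. (One can check this directly: for $t=a/q$ with prime $q\sim N^{3/2}$ and $x=0$, $G$ is an incomplete Gauss sum of size about $\sqrt{q}\sim N^{3/4}$, whereas your bound would predict $N^{1/4}$.) As a secondary matter, with $Q\ge N$ you have $(10Q^2)^{-1}\le N^{-2}$, so your second $\varphi$-range $N^{-2}<|\varphi|\le(10Q^2)^{-1}$ is empty, and the first integral should be over $|\varphi|\le(10Q^2)^{-1}$, giving $N^nQ^{-2}$ rather than $N^{n-2}$.

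If one replaces your incorrect pointwise bound by the true one $|G|\lesssim N^\epsilon\sqrt{Q}$ and carries out the same accounting (each of $\sim Q^2/\log Q$ arcs of width $\sim Q^{-2}$ contributes $\lesssim Q^{-2}(N^\epsilon\sqrt{Q})^n$, multiplied by $c_Q\lesssim\log Q$), one gets only $\|K^Q\|_\infty\lesssim Q^{n/2+\epsilon}$, which is strictly weaker than the target $N^{2+\epsilon}Q^{(n-4)/2}$ for $Q\ge N$. So a purely pointwise argument along your lines cannot succeed. The paper's proof is structurally different: for fixed $x$ it does not bound $|G(t,x)|$ pointwise on $S_Q$ by a single number, but instead uses \eqref{e9} to obtain a \emph{distributional} (level set) estimate $|\{t:|G(t,x)|\ge 2^s\}|\lesssim N^{2+\epsilon}2^{-4s}$ (valid for $2^s\gtrsim N^{1/2+\epsilon}$, and here $q\le N$ is the correct regime because every $t\in[0,1]$ has a Dirichlet approximation with $q\le N$), combines this with the a priori bound $|G|\lesssim N^\epsilon\sqrt{Q}$ on $S_Q$, and then applies H\"older in $t$ with exponent $n$. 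The gain comes from the fact that the set where $|G|$ is close to its maximum $\sqrt{Q}$ is very small; a pointwise estimate discards exactly this information.
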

\begin{proof}
Fix  $x\in \T$ and  $2^s\gtrsim N^\epsilon\sqrt{N}$. From \eqref{e9} we deduce that
$$|\{t\in[0,1]:|G(t,x)|\ge 2^s\}|\lesssim\sum_{q\lesssim (\frac{N^{1+\epsilon}}{2^s})^2}\frac{\phi(q)}{q2^{2s}}\lesssim N^{2+2\epsilon}2^{-4s},$$
where $\phi$ is the Euler totient function.
The proof of Proposition \ref{prop1} shows  that if $t$ is in the support $S_Q$ of $\eta_Q$ we have
$$|G(t,x)|\lesssim N^{\epsilon}\sqrt{Q}.$$
Thus for each fixed $x$
$$\|G(t,x)\|_{L^n(S_Q)}^n\lesssim N^{2+\epsilon}\sum_{N^{\frac12+\epsilon}\le 2^s\lesssim N^{\epsilon}\sqrt{Q}}2^{s(n-4)}+N^{\frac{n}{2}+\epsilon}\lesssim N^{2+\epsilon}Q^{\frac{n-4}{2}}.$$
The result now follows from H\"older in $t$.
\end{proof}

To summarize, we have for each $n\ge 4$
\begin{equation}
\label{3}
\|K^Q\|_{\infty}\lesssim\begin{cases}N^{2}Q^{\frac{n-4}{2}+\epsilon}&: \text{ if } Q\ge N^{4/3}\\  \hfill Q^{\frac{n-1}2+\epsilon}&:\text{if }N\le Q\le N^{4/3}\end{cases}.
\end{equation}

We also have the following estimate on the Fourier side
\begin{proposition}\label{rgrth76i78lo.kj}
Given $N \le Q\le N^2$ we have for each $n\ge 1$
$$\|\F(K-K^Q)\|_{\infty}\lesssim N^{\epsilon}Q^{-1}.$$
\end{proposition}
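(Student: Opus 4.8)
The plan is to estimate the Fourier coefficients $\F(K-K^Q)(\xi)$ for $\xi\in\Z^n$. Recall from \eqref{e8} that $\F(K)(\xi)=\int_{[0,1]}\prod_{j=1}^n G(t,\xi_j)e(-\lambda t)\,dt$ restricted appropriately — more precisely, expanding the product $\prod_j G(t,x_j)$ into its Fourier series in $\x$, the coefficient of $e(\xi\cdot\x)$ is $\int_{[0,1]}\big(\prod_{j=1}^n\gamma(\xi_j/N)\big)e\big((\xi_1^2+\ldots+\xi_n^2)t\big)e(-\lambda t)\,dt$, which equals $\prod_j\gamma(\xi_j/N)$ when $|\xi|^2=\lambda$ and $0$ otherwise. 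Likewise $\F(K^Q)(\xi)=\prod_j\gamma(\xi_j/N)\int_{[0,1]}e\big((|\xi|^2-\lambda)t\big)\eta_Q(t)\,dt=\prod_j\gamma(\xi_j/N)\,\widehat{\eta_Q}(\lambda-|\xi|^2)$, where $\widehat{\eta_Q}$ denotes the Fourier transform on $\R$ (or rather the periodization; since $\eta_Q$ is a finite sum of narrow bumps this is unambiguous up to negligible terms). Hence
$$\F(K-K^Q)(\xi)=\Big(\prod_{j=1}^n\gamma(\xi_j/N)\Big)\big(1_{\{|\xi|^2=\lambda\}}-\widehat{\eta_Q}(\lambda-|\xi|^2)\big).$$
So everything reduces to bounding $\sup_{m\in\Z}|1_{\{m=0\}}-\widehat{\eta_Q}(m)|$.

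Next I would analyze $\widehat{\eta_Q}(m)=c_Q\sum_{a/q\in F_Q}\widehat{\eta}\big(m/(10Q^2)\big)e(ma/q)\cdot(10Q^2)^{-1}$; writing $\widehat\eta$ for the (Schwartz) Fourier transform of $\eta$, the $m$-dependent scalar $\widehat\eta(m/(10Q^2))$ factors out, so
$$\widehat{\eta_Q}(m)=\frac{c_Q}{10Q^2}\,\widehat\eta\Big(\frac{m}{10Q^2}\Big)\sum_{q\in A_Q}\;\sum_{a=1}^{q-1}e(ma/q)=\frac{c_Q}{10Q^2}\,\widehat\eta\Big(\frac{m}{10Q^2}\Big)\sum_{q\in A_Q}\big(c_q(m)-1+1_{q\mid m}\big),$$
where $c_q(m)=\sum_{(a,q)=1,\,1\le a\le q}e(ma/q)$ is the Ramanujan sum (I include $a=q$, i.e. $a\equiv 0$, only for $q\mid m$, hence the correction; this is where working with prime $q$ keeps the bookkeeping trivial since $c_q(m)=q-1$ if $q\mid m$ and $c_q(m)=-1$ otherwise). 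For $m=0$ one gets exactly $\widehat{\eta_Q}(0)=\int\eta_Q=1$, consistent with the $1_{\{m=0\}}$ term cancelling. For $m\ne 0$: since $|m|=|\lambda-|\xi|^2|\lesssim N^2\sim Q$ on the support of $\prod\gamma(\xi_j/N)$ (as $\lambda\le N^2$ and $|\xi_j|\le 2N$), there is at most one prime $q\in A_Q$ dividing $m$, contributing $O(Q)$; the remaining $q$ each contribute $c_q(m)=-1$, total $O(|A_Q|)=O(Q/\log Q)$. Using $c_Q\lesssim\log Q$ and the trivial bound $|\widehat\eta(m/(10Q^2))|\le\|\eta\|_1\lesssim 1$, this gives $|\widehat{\eta_Q}(m)|\lesssim (\log Q)\,Q^{-2}\,(Q+Q/\log Q)\lesssim Q^{-1}\log Q\lesssim N^\epsilon Q^{-1}$, as desired, and this also absorbs the $1_{\{m=0\}}=0$ term for $m\ne0$.

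The main subtlety — not an obstacle so much as the point requiring care — is justifying that the Fourier coefficient computation for $K^Q$ does not pick up spurious contributions: $\eta_Q$ as defined is a sum of bumps of width $\sim Q^{-2}$ placed at points of $[0,1]$, so strictly $\eta_Q$ is a function on $\R$ and one should either periodize it or observe that the bumps near $0$ and near $1$ together behave like a single bump (since $\eta$ is rapidly decaying), so that $\int_{[0,1]}e(mt)\eta_Q(t)\,dt$ agrees with $\widehat{\eta_Q}(-m)$ up to an error $O_M(N^{-M})$ uniformly in $|m|\lesssim Q$; these tail errors are harmless against the target $N^\epsilon Q^{-1}$. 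One should also note $|\prod_j\gamma(\xi_j/N)|\le 1$, so the cutoff only helps. Beyond this, the argument is the elementary Ramanujan-sum computation sketched above, and the prime-moduli restriction makes every sum over $a$ explicit.
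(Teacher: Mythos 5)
Your proof takes essentially the same route as the paper: reduce to estimating the Fourier coefficients of $1-\eta_Q$ (equivalently $\widehat{\eta_Q}(m)$ for $m\neq 0$, together with the observation $\int\eta_Q=1$ for $m=0$), evaluate the inner sum over $a$ for prime $q$ explicitly, and note that at most $O(1)$ primes from $A_Q$ can divide the relevant $m$. One small imprecision: you write $|m|\lesssim N^2\sim Q$, but $Q$ ranges up to $N^2$ so this is not an equivalence; what you actually need is only $|m|\lesssim N^2\le Q^2$, which still forces $O(1)$ prime divisors from $A_Q$ — and the paper sidesteps the restriction on $m$ altogether by absorbing the growth in the number of prime divisors into the rapid decay of $\widehat{\eta}$, which you could also invoke.
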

\begin{proof}
Note that for each $\k\in\Z^n$
$$\F(K-K^Q)(\k)=\widehat{1-\eta_Q}(|\k|^2-\lambda)\prod_{i=1}^n\gamma(\frac{k_i}{N}).$$

If $l$ is any nonzero integer then
$$\widehat{1-\eta_Q}(l)=c_Q(10Q^2)^{-1}\widehat{\eta}(\frac{l}{10Q^2})\sum_{q\in A_Q}\sum_{a=1}^{q-1}e(la/q)$$
$$=c_Q(10Q^2)^{-1}\widehat{\eta}(\frac{l}{10Q^2})\sum_{q\in A_Q}\sum_{a=1}^{q-1}e(la/q)$$
$$=c_Q(10Q^2)^{-1}\widehat{\eta}(\frac{l}{10Q^2})(\sum_{q\in A_Q:q \text{ divides }l}q-|A_Q|).$$
When $l$ gets larger, the increase of the number of its prime divisors from $A_Q$ is offset by the decay of $\widehat{\eta}$
$$|\widehat{\eta}(z)|\lesssim (1+|z|)^{-100}$$
 and we get
$$|\widehat{1-\eta_Q}(l)|\lesssim_{\epsilon} Q^{\epsilon-1}.$$
The result now follows from the fact that $1-\eta_Q$ has mean zero.
\end{proof}

Assume now $\|a_\xi\|_{l^2(\F_{n,\lambda})}=1$ and let
$$F(\x)=\sum_{\xi\in \F_{n,\lambda}}a_\xi e(\xi\cdot\x).$$For  $\alpha>0$ define
$$E_\alpha=\{\x\in\T^n:|F(\x)|>\alpha\},$$
$$f(\x)=\frac{F(\x)}{|F(\x)|}1_{E_{\alpha}}(\x).$$
It follows that
$$\alpha|E_\alpha|\le\int_{\T^n}\bar{F}(\x)f(\x)d\x=\sum_{\xi\in \F_{n,\lambda}}\bar{a_\xi} \F(f)(\xi),$$
and thus
$$\alpha^2|E_\alpha|^2\le\sum_{\xi\in \F_{n,\lambda}}|\F(f)(\xi)|^2=\langle K*f,f\rangle.$$
This in turns implies that
$$\alpha^2|E_\alpha|^2\le \|K^Q\|_{\infty}|E_\alpha|^2+\|\F(K-K^Q)\|_{\infty}|E_\alpha|.$$
We now use  \eqref{3}, by choosing $Q$ appropriately so that the upper bound for $\|K^Q\|_{\infty}$ is roughly $\alpha^2$.  We get for each $n\ge 5$
\begin{equation}
\label{4}
|E_\alpha|\lesssim\begin{cases}N^{\epsilon}\frac{1}{\alpha^{2\frac{n+1}{n-1}}}&: \text{ if } N^{\frac{n-1}{4}}\le \alpha\le N^{\frac{n-1}{3}} \\ \\ \hfill N^{\frac{4}{n-4}+\epsilon}\frac{1}{\alpha^{\frac{2n-4}{n-4}}}&:
\text{ if } \alpha\ge N^{\frac{n-1}{3}}
\end{cases}.
\end{equation}

\section{From continuous to discrete restriction}
\label{CT-sec}
 One may wonder whether the estimate \eqref{T-S} for some $p$ directly implies its discrete analogue, namely  $M_{p,2,n}(\lambda)\lesssim N^\epsilon$. The answer is "no" for both the sphere and the paraboloid, and here is why. It is a basic fact that \eqref{T-S} is  equivalent with ($B_N$ is the ball centered at the origin with radius $N$ in $\R^n$)
$$\|\sum_{\xi\in\Lambda}a_\xi e(\xi\cdot\x)\|_{L^p(B_1)}\lesssim N^{\frac{n-1}{2}-\frac{n}{p}}\|a_\xi\|_{l^2(\Lambda)}$$
for each $a_\xi\in\C$ and each 1-separated set $\Lambda$ on the sphere
$\{\xi\in\R^n:|\xi|^2=\lambda\}$.
The result also holds for the paraboloid
$$\{\xi=(\xi_1,\ldots,\xi_n)\in\R^n:|\xi_1|,\ldots|\xi_{n-1}|\le N,\xi_n=\xi_1^2+\ldots+\xi_{n-1}^2\}.$$
Since \eqref{T-S} fails for $p=\frac{2n}{n-1}$, no valuable information can be derived this way about $M_{\frac{2n}{n-1},2,n}(\lambda)$. Luckily, the index $\frac{2n}{n-1}$ plays a key role in the multilinear restriction theory. More precisely, it was proved in \cite{BCT} that if $P_1,\ldots,P_n$ are transverse regions of the sphere $S^{n-1}$ (or the paraboloid), then one can improve over the Thomas-Stein exponent, at the expense of loosing $N^{\epsilon}$
$$\|(\Pi_{i=1}^n \widehat{fd\sigma_{P_i}})^{1/n}\|_{L^{\frac{2n}{n-1}}(B_N)}\lesssim N^{\epsilon}\|f\|_{L^2(S^{n-1})}.$$
As in the linear case, this implies
$$\|(\Pi_{i=1}^n|\sum_{\xi\in\Lambda_i}a_\xi e(\xi\cdot\x)|)^{1/n}\|_{L^{\frac{2n}{n-1}}(B_1)}\lesssim N^{\epsilon}\|a_\xi\|_{l^2(\Lambda)}$$
where $\Lambda$ is as before, while $\Lambda_i$ are transverse subsets of $\Lambda$. This is the staring point in the argument from \cite{Bo2} which combines it with induction on scales to prove that, if $\Lambda$ is in addition assumed to be in $\Z^n$, we have the unrestricted inequality
$$\|\sum_{\xi\in\Lambda}a_\xi e(\xi\cdot\x)\|_{L^{\frac{2n}{n-1}}(\T^n)}\lesssim N^{\epsilon}\|a_\xi\|_{l^2(\Lambda)}.$$
In particular,
\begin{equation}
\label{BoMo}
M_{\frac{2n}{n-1},2,n}(\lambda)\lesssim N^{\epsilon}.
\end{equation}

\section{Proof of Theorem \ref{thm1}}
We use the notation from the previous section, and the assumption $\|a_\xi\|_{l^2(\F_{n,\lambda})}=1$. Define the index $p_n=\frac{2n}{n-3}$.
The estimate in \eqref{BoMo} implies
$$|E_\alpha|\lesssim N^{\epsilon}\frac{1}{\alpha^{2\frac{n}{n-1}}},$$
valid for each $\alpha>0$ and $n\ge 2$.
Using this,  we get the conjectured bound for each $n\ge 4$ and $p\ge p_n$ for $\alpha$ small
$$\int_0^{N^{\frac{n-1}{4}}} \alpha^{p-1}|E_\alpha|d\alpha\lesssim N^{p(\frac{n-2}{2}-\frac{n}{p}+\epsilon)}.$$

Using the bounds in \eqref{4} we  get
$$\int_{N^{\frac{n-1}{4}}}^{N^{\frac{n-1}{3}}}\alpha^{p-1}|E_\alpha|d\alpha\lesssim N^{p(\frac{n-2}{2}-\frac{n}{p}+\epsilon)},$$
for $n\ge 6$ and $p\ge p_n$, and also
$$\int_{N^{\frac{n-1}{3}}}^{N^{\frac{n-2}{2}}}\alpha^{p-1}|E_\alpha|d\alpha\lesssim N^{p(\frac{n-2}{2}-\frac{n}{p}+\epsilon)},$$
for each $n\ge 5$ and all $p\ge 1$. This completes the proof.

\section{An alternative argument}
\label{new}
We now sketch an alternative argument which will also cover the remaining cases $n=4,5$ of Theorem \ref{thm1}. The argument follows the lines of \cite{Bo3} with input from \cite{Bo1}. Let $\eta$ be an appropriate Schwartz function which equals 1 on $\frac14\le |t|\le \frac12$ and is supported on $\frac18\le |t|\le 1$.
For $Q<N$ and $Q\le 2^s\le N$ we define
$$R_Q=\{\frac{a}{q}:\;(a,q)=1,\;Q\le q<2Q\}$$
$$\eta_{Q,s}(t)=\sum_{a/q\in R_Q}\eta((t-{a}/q)N2^s).$$
Note that $\eta_{Q,s}$ is supported on
$$V_{Q,s}=\{t\in\T:\;|t-\frac{a}{q}|\sim \frac1{N2^s}\text{ for some }\frac{a}q\in R_Q\}.$$
Define also
$$K^{Q,s}(\x)=\int_{[0,1]}\prod_{j=1}^nG(t,x_j)e(-\lambda t)\eta_{Q,s}(t)dt,$$
and the correction factors
$$\rho:=1-\sum_{Q<N/100}\sum_{Q\le 2^s\le N}\eta_{Q,s},$$
$$K^{\text{minor}}=K-\sum_{Q<N/100}\sum_{Q\le 2^s\le N}K^{Q,s}$$

Recall the estimate (2.15) in \cite{Bo1}, (see also Proposition \ref{prop1} here)
\begin{equation}
\label{bnew1}
\|K^{Q,s}\|_\infty\lesssim (N2^s)^{\frac{n}{2}-1+\epsilon}Q^{-\frac{n-3}{2}}.
\end{equation}
An argument very similar to the one in Proposition \ref{rgrth76i78lo.kj} here shows that
\begin{equation}
\label{bnew2}
|\F(K^{Q,s})(\k)|=\begin{cases}\sim \frac{Q^2}{N2^s}&:\quad \text{if}\quad \k=0  \\ \hfill  \lesssim \frac{Q^{1+\epsilon}}{N2^s}&:\quad \text{if}\quad \k\not=0\end{cases}.
\end{equation}
Also, it is immediate that
\begin{equation}
\label{bnew3}
\|K^{\text{minor}}\|_\infty\lesssim N^{\frac{n-1}{2}+\epsilon},
\end{equation}
and
\begin{equation}
\label{bnew4}
|\F(K^{\text{minor}})(\k)|=\begin{cases}=\F(\rho)(0)\sim 1&:\quad \text{if}\quad \k=0  \\ \hfill  \lesssim \frac{1}{N^{1-\epsilon}}&:\quad \text{if}\quad \k\not=0\end{cases}.
\end{equation}

Define
$$\alpha_{Q,s}=\frac{\F(K^{Q,s})(0)}{\F(\rho)(0)}$$
and $K^{Q,s}_1=K^{Q,s}-\alpha_{Q,s}K^{\text{minor}}$.

It follows from \eqref{bnew1}-\eqref{bnew4} that
\begin{equation}
\label{bnew5}
\|\F(K^{Q,s}_1)\|_{\infty}\lesssim \frac{QN^\epsilon}{N2^s}
\end{equation}
and
\begin{equation}
\label{bnew6}
\|K^{Q,s}_1\|_{\infty}\lesssim \frac{(N2^s)^{\frac{n}2-1+\epsilon}}{Q^{\frac{n-3}{2}}}.
\end{equation}
These estimates imply as before that
\begin{equation}
\label{bnew7}
\|F*K^{Q,s}_1\|_{2}\lesssim \frac{QN^\epsilon}{N2^s}\|F\|_2
\end{equation}
and
\begin{equation}
\label{bnew8}
\|F*K^{Q,s}_1\|_{\infty}\lesssim \frac{(N2^s)^{\frac{n}2-1+\epsilon}}{Q^{\frac{n-3}{2}}}\|F\|_1.
\end{equation}
Interpolating between \eqref{bnew7} and \eqref{bnew8} gives for $p_0=\frac{2(n-1)}{(n-3)}$
\begin{equation}
\label{bnew88}
\|F*K^{Q,s}_1\|_{p_0}\lesssim N^{\frac2{n-1}+\epsilon}\|F\|_{p_p'}.
\end{equation}
Thus, if we denote
$$K_1=\sum_{Q<N/100}\sum_{Q\le 2^s\le N}K^{Q,s}_1$$
we also get via the triangle inequality
\begin{equation}
\label{bnew9}
\|F*K_1\|_{p_0}\lesssim N^{\frac2{n-1}+\epsilon}\|F\|_{p_0'}.
\end{equation}
Next we note that
$$
\|\F(K-K_1)\|_{\infty}\lesssim N^{\frac{n-1}{2}+\epsilon}
$$
and thus
\begin{equation}
\label{bnew11}
\|F*(K-K_1)\|_{\infty}\lesssim N^{\frac{n-1}{2}+\epsilon}\|F\|_1.
\end{equation}
Assume now $\|a_\xi\|_{l^2(\F_{n,\lambda})}=1$ and let
$$F(\x)=\sum_{\xi\in \F_{n,\lambda}}a_\xi e(\xi\cdot\x).$$For  $\alpha>0$ define
$$E_\alpha=\{\x\in\T^n:|F(\x)|>\alpha\},$$
$$f(\x)=\frac{F(\x)}{|F(\x)|}1_{E_{\alpha}}(\x).$$
It follows that
$$\alpha|E_\alpha|\le \langle F,f\rangle=\langle F*K,f*K\rangle\le \|f*K\|_2.$$
Thus, by invoking \eqref{bnew9} and \eqref{bnew11} we get
$$
\alpha^2|E_\alpha|^2\le \langle f,f*K\rangle\le |\langle f,f*K_1\rangle|+|\langle f,f*(K-K_1)\rangle|
$$
$$\le \|f\|_{p_0'}^2N^{\frac{2}{n-1}+\epsilon}+\|f\|_1^2N^{\frac{n-1}{2}+\epsilon}$$
\begin{equation*}
\label{bnew12}
\le |E_\alpha|^{\frac2{p_0'}}N^{\frac{2}{n-1}+\epsilon}+|E_\alpha|^2N^{\frac{n-1}{2}+\epsilon}.
\end{equation*}
Thus, for $\alpha>\alpha_0:=N^{\frac{n-1}{4}+\epsilon}$ we get
\begin{equation}
\label{bnew13}
|E_\alpha|\le \alpha^{-2\frac{n-1}{n-3}}N^{\frac2{n-3}}.
\end{equation}
Fix now $p>\frac{2n}{n-3}$. We first use \eqref{BoMo} to write
$$\int |F|^p\le \alpha_0^{p-\frac{2n}{n-1}}+\int_{|F|>\alpha_0} |F|^p.$$
Using \eqref{bnew13} this is further bounded by
$$N^{\frac{n-1}{4}(p-\frac{2n}{n-1})+\epsilon}+N^{\frac{n-2}{2}(p-\frac{2(n-1)}{n-3})+\frac2{n-3}}\lesssim N^{\frac{n-2}{2}p-n}.$$
This finishes the argument.
\section{Closing remarks}

Improving further the range in Theorem \ref{thm1} may rely on exploiting cancelations occurring in sums of Kloosterman sums. Such an example is the Selberg conjecture, which states that
$$|\sum_{q\le X}\frac{K(m,n,q)}{q}|\lesssim (mnX)^{\epsilon}.$$
Since the typical size of $|K(m,n,q)|$ is $\sqrt{q}$, the conjecture predicts a square root cancelation between Kloosterman sums. Recent progress in this direction appears in \cite{Sar} and  \cite{GaSe}.

The piece $K^Q$ of the kernel $K$ introduced earlier in the paper can be defined to incorporate all moduli $Q\le q\le 2Q$ (not only the primes), and the bound in Proposition \ref{rgrth76i78lo.kj} will continue to hold.
It is possible that the correct bound for such a variant of $K^Q$ to be
\begin{equation}
\label{kldfjverf9043i8g56igp[k}
\|K^Q\|_{\infty}\lesssim Q^{\frac{n-2}{2}}.
\end{equation}
This amounts to an additional square root cancelation over the result in Proposition \ref{prop1}. If \eqref{kldfjverf9043i8g56igp[k} held true, the approach described in this paper would imply precisely the sharp level set estimate
$$|E_\alpha|\le \frac{1}{\alpha^{\frac{2n}{n-2}}},$$
albeit only for $\alpha\gtrsim N^{\frac{n-2}{4}}.$
The difficulty of getting the estimate
$$|K^Q(\x)|\lesssim Q^{\frac{n-2}{2}}$$
for a fixed $\x$ comes from the fact that while one of the entries $m,n$ in the Kloosterman sum is fixed (it equals $-\lambda$), the other entry is variable, it depends on $q$.

We also mention that appropriate control over sums of Kloosterman sums would allow a circle method treatment of the
representation problem of integers by sums of three squares.

\end{document}